\newcolumntype{Y}{>{\centering\arraybackslash}X}
\newtheorem{prethm}{{\bf  Theorem}}
\newenvironment{thm}{\begin{prethm}{\hspace{-0.5
               em}{\bf .}}}{\end{prethm}}
\newtheorem{prepro}{{\bf  Theorem}}
\newtheorem{precor}{{\bf  Corollary}}
\newtheorem{prepos}{{\bf  Preposition}}
\newtheorem{preexample}{{\bf Example}}
\newtheorem{preconj}{{\bf  Conjecture}}
\newenvironment{conj}{\begin{preconj}{\hspace{-0.5
               em}{\bf .}}}{\end{preconj}}
\newtheorem{preremark}{{\bf  Remark}}
\newtheorem{prelem}{{\bf  Lemma}}
\newtheorem{preclaim}{{\bf  Claim}}
\newenvironment{claim}{\begin{preclaim}{\hspace{-0.5
               em}{\bf .}}}{\end{preclaim}}
\newtheorem{preproof}{{\bf  Proof.}}
\newenvironment{proof}[1]{\begin{preproof}{\rm
               #1}\hfill{$\Box$}}{\end{preproof}}
\long\def\/*#1*/{}
\title{\large \bf On the Maximum Order of Induced Paths \\ and Induced Forests in Regular Graphs
}
\author{{\normalsize
{\sc S. Akbari${}^{\mathsf{a}}$},\,
{\text{\sc A. Amanihamedani${}^{\mathsf{b}}$}},\,
  {\text{\sc S. Mousavi${}^{\mathsf{b}}$}},\,
  {\text{\sc H. Nikpey${}^{\mathsf{b, c}}$}},\,
  {\sc \text{S. Sheybani}${}^{\mathsf{b}}$}},\,
 \vspace{3mm}
\\{\footnotesize{${}^{\mathsf{a}}$\it Department of
Mathematical Sciences, Sharif University of Technology, Tehran,
Iran}}
{\footnotesize{}}\\{\footnotesize{${}^{\mathsf{b}}$\it Department of
Computer Engineering, Sharif University of Technology, Tehran,
Iran}}
{\footnotesize{}}\\{\footnotesize{${}^{\mathsf{c}}$\it Department of
Computer and Information Science, University of Pennsylvania, Philadelphia,
USA}}\thanks{{\it E-mail addresses}: $\mathsf{s\_akbari@sharif.edu}$, $\mathsf{aramani@ce.sharif.edu}$,
$\mathsf{smousavi@ce.sharif.edu}$, $\mathsf{hesam@seas.upenn.edu}$, $\mathsf{sheybani@ce.sharif.edu}$.}}
\date{}
\begin{document}

\maketitle

\begin{abstract}

Let $G$ be a graph and $a(G)$, $\textup{LIF}(G)$ denote the maximum orders of an induced forest and an induced linear forest of $G$, respectively. It is well-known that if $G$ is an $r$-regular graph of order $n$, then $a(G) \geq \frac{2}{r+1}n$. In this paper, we generalize this result by showing that $\textup{LIF}(G) \geq \frac{2}{r+1}n$. It was proved that for every graph $G$, $a(G) \geq \sum_{i=1}^{n}\frac{2}{d_i+1}$, where $d_1, \ldots, d_n$ is the degree sequence of $G$. Here, we conjecture that for every  graph  $G$ with $\delta(G) \geq  2$, $\textup{LIF}(G) \geq \sum_{i=1}^{n}\frac{2}{d_i+1}$.

\end{abstract}


\textbf{Keywords:} Induced forest,  Induced path, Regular graph.

\textbf{2010 AMS Subject Classification Number:} 05C38, 05C85
    

\/*
Comment :D
*/

\section{Introduction}
Let $G$ be a graph with vertex set $V(G)$ and edge set $E(G)$. The number of vertices and edges of $G$ are called \textit{order} and \textit{size} of $G$, respectively. In this paper, we just consider simple graphs. The complement of $G$ is denoted by $\overline{G}$. The degree of a vertex $v$ is denoted by $d_G(v)$. Let $\delta(G)$ be the minimum degree of $G$. For two positive integers $m$ and $n$ let $K_n$ and $K_{m,n}$ denote the complete graph of order $n$ and the complete bipartite graph with part sizes $m$ and $n$. Let $d_1, \ldots, d_n$ be the degree sequence of $G$ and $t(G) = \sum_{i=1}^{n}\frac{1}{d_i+1}$. For every $v \in V(G)$ and any subgraph $H$ of $G$, $d_{H}(v)$ indicates the number of neighbors of $v$ in $V(H)$. Let $a(G)$ denote the maximum order of an induced forest in $G$. Let $\textup{LIP}(G)$ denote the order of a longest induced path in $G$. If every connected component of a forest $F$ is a path, then $F$ is said to be a \textit{linear forest}. Let $\textup{LIF}(G)$ denote the order of the largest induced linear forest in $G$. 
The well-known Caro-Wei bound states that $G$ contains an independent set of size at least $t(G)$, see \cite{caro}, \cite{MURPHY}, and \cite{wei}. In \cite{shixu}, it is shown that for a connected graph $G$ of order $n$ and size $m$, $a(G) \geq \frac{8n-2m-2}{9}$. The right hand side of this inequality is negative for $r$-regular graphs with $r \geq 8$, which is trivial. In this paper, we provide a non-trivial lower bound for $a(G)$, where $G$ is a regular graph. Moreover, there are some lower bounds on the order of the largest induced linear forest in a triangle-free planar graph and an outerplanar graph \cite{dross}, \cite{pelsmajer}. If $\delta(G) \geq 1$, then it is shown that $a(G) \geq 2 t(G)$, see \cite{punnim}. Also, the author shows that for an $r$-regular graph $G$ of order $n$, $a(G) \geq \frac{2}{r+1}n$. Here, we generalize this result to linear induced forest and prove that $\textup{LIF}(G) \geq \frac{2}{r+1}n$ for a general $r$-regular graph. Furthermore, we show that every $r$-regular graph $G$ with $\textup{LIP}(G) = k$ has order at least $2k + \lceil -\frac{2(k-1)}{r}\rceil$. In addition, there is a graph for which this bound is sharp. Next, we present a Nordhaus-Gaddum type inequality for $a(G)$ and provide a family of examples where equality holds for them, see \cite{Nord}. Finally, we present some computational results on order of the largest induced forest in a graph and conjecture that for every graph with $\delta(G) \geq 2$, $\text{LIF}(G) \geq 2t(G)$.

\section{A Sharp Lower Bound for the Order of Largest \\ Induced Linear Forest in Regular Graphs}

In this section, we obtain a lower bound for $\textup{LIF}(G)$ in terms of $n$ and $r$, where $G$ is an $r$-regular graph of order $n$. Finally, we present a polynomial time algorithm to find an induced linear forest of the given lower bound order.

\begin{thm} \label{lower_LIF}
\textit{
Let
$G$
be an $r$-regular graph of order $n$. Then, $\textup{LIF}(G) \geq \frac{2}{r+1}n$ and this bound is sharp.
}
\end{thm}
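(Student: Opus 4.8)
The plan is to prove the lower bound by an extremal argument on a maximum induced linear forest, and to verify sharpness by an explicit family; the same argument read constructively yields the promised polynomial-time algorithm. Sharpness is immediate: if $G$ is a disjoint union of $m$ copies of $K_{r+1}$, then $n=m(r+1)$, and every set of three vertices inside a single copy induces a triangle, so each copy contributes at most two vertices to any induced linear forest. Taking one edge per copy then gives $\textup{LIF}(G)=2m=\frac{2}{r+1}n$, so the bound cannot be improved.

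For the lower bound, I would let $L$ be a maximum induced linear forest, chosen among all such forests to have the fewest components, and set $S=V(G)\setminus L$ and $f=|L|$. First I would record the maximality conditions. Every $v\in S$ satisfies $d_L(v)\ge 1$, since otherwise $L\cup\{v\}$ would be a larger induced linear forest; a vertex with $d_L(v)=1$ must be adjacent to an interior (degree-$2$) vertex of $L$, since otherwise it would extend a path; and a vertex with $d_L(v)=2$ cannot have its two neighbors be endpoints of two distinct components, since otherwise it would join them into one path. Writing $L$ as $p$ paths with $f-p$ edges, I would then play two global counts of the edges between $S$ and $L$ against each other: on one hand $e(S,L)=\sum_{w\in L}d_S(w)\le\sum_{w\in L}(r-d_L(w))=(r-2)f+2p$ because $G$ is $r$-regular, and on the other hand $e(S,L)=\sum_{v\in S}d_L(v)$, which the maximality conditions push toward $2|S|$.

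The target inequality is $2|S|\le(r-1)f$, equivalently $f\ge\frac{2}{r+1}n$, and it would follow at once from $2|S|\le e(S,L)$ together with $f\ge 2p$. The main obstacle is that neither of these holds unconditionally: vertices of $S$ with $d_L(v)=1$ contribute only $1$ to $e(S,L)$, and singleton components of $L$ make $f<2p$. I expect these two deficiencies to be in conservation, since short components are exactly what create spare endpoint capacity, while degree-$1$ attachments are forced onto interior vertices. The plan is therefore to set up a discharging scheme that sends two units of charge from each $S$-vertex into $L$, routing the charge of a degree-$1$ vertex through the interior vertex it hits and compensating singleton components against the endpoints they fail to provide; minimality of the number of components is what I would use to forbid the bad configurations, such as an $S$-vertex adjacent to two distinct endpoints. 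Balancing this discharging to yield exactly the factor $\frac{r+1}{2}$ is the crux of the argument.

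Finally, reading the same argument greedily gives the algorithm: repeatedly select a set $K$ inducing a linear forest with $|N_G[K]|\le\frac{r+1}{2}|K|$, output $K$, delete its closed neighborhood, and recurse on a graph of maximum degree at most $r$, whose pieces then attach to $K$ without creating edges. The easy case is a vertex of degree at most $\frac{r-1}{2}$, where $K=\{v\}$ works immediately; the delicate case is the high-minimum-degree regime, where a near-clique yields an efficient edge (as in the extremal $K_{r+1}$), whereas a sparse neighborhood yields an efficient independent set or a longer induced path. Making this selection explicit, governed by the same balance as above, is what turns the bound into a polynomial-time procedure.
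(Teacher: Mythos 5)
Your sharpness argument is fine (disjoint copies of $K_{r+1}$ work, and the paper itself uses $K_{r+1}$), but the lower bound has a genuine gap, and it is the one you yourself flag: the discharging scheme that is supposed to reconcile the two deficiencies is never constructed, and it is not a routine verification --- it is the entire theorem. Unconditionally, your double count gives only $|S|\le e(S,L)\le (r-2)f+2p\le rf$, i.e.\ $f\ge \frac{n}{r+1}$, half of what is needed. To reach $2|S|\le (r-1)f$ you would need \emph{both} $d_L(v)\ge 2$ for every $v\in S$ \emph{and} $p\le f/2$, and your setup secures neither. Worse, your tie-break points the wrong way: choosing $L$ with \emph{fewest components} is what allows degree-$1$ attachments to interior vertices to survive, since the swap ``delete the interior vertex $u$, insert $v$'' preserves the order but \emph{increases} the number of components. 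The paper instead breaks ties by \emph{minimum number of edges} (equivalently, maximum number of components), under which that same swap removes two edges and yields a contradiction, forcing $d_L(v)\ge 2$ for every outside vertex. But even after switching the tie-break, the minimum-edge choice actively favors singleton components, so the needed bound $p\le f/2$ is not available either, and you offer no mechanism to control it.

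The paper's proof is structurally different and worth contrasting with your plan: rather than analyzing one maximum forest, it iteratively partitions $V(G)$ into induced linear forests $F_1,\dots,F_k$ (each of maximum order and, among those, minimum size in the remaining graph). Its Claim~1 --- every vertex of a later forest has at least two neighbors in \emph{each} earlier forest --- converts the degree-$2$ condition into a bound on the number of layers, $2(k-1)\le r$, whence $|F_1|\ge n/k\ge \frac{2}{r+1}n$ when $r$ is odd. The even case is exactly where a single-layer count cannot succeed: when $2(k-1)=r$ the forest $F_k$ is an independent set, and the paper needs a second structural claim (each vertex of $F_k$ sees the two endpoints of its own private path in $F_{k-1}$, giving $c_{k-1}\ge 2c_k$) together with a case split on whether $c_k\ge\frac{1}{r+1}$, in one branch deleting $F_k$ and re-running the averaging over $k-1$ layers. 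Your single-forest discharging has no analogue of this global amplification, and your closing algorithm sketch (finding $K$ with $|N_G[K]|\le\frac{r+1}{2}|K|$) presupposes existence statements that are again equivalent to the unproved crux; the paper's algorithm instead mimics its proof, greedily building maximal linear forests with the two swap rules and reordering layers by size.
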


\begin{proof}
{
If
$r=1$,
then
the assertion is trivial.
Now, suppose that $r \geq 2$.
Let $G_1 = G$. At the first step, let
$F_1$
be an induced linear forest with maximum order of $G_1$ whose size is minimum. At the $i$-th step, $i \geq 2$, let $G_i = G \setminus \bigcup_{j=1}^{i-1}V(F_j)$ and $F_i$ be an induced linear forest of maximum order of $G_i$ whose size is minimum. Continue this procedure until $V(G)$ is partitioned into $k$ induced linear forests, namely $F_1, \ldots, F_k$. For
$i = 1,\ldots,k$,
let
$|F_i| = c_in$, for some $c_i \in (0, 1)$.
We have
$c_1 + \cdots + c_k = 1$ and $c_1 \geq \cdots \geq c_k > 0$. Since $V(G)$ is partitioned into
$k$
linear induced forests, we have

\begin{equation} \label{eq:12}
|F_1| = c_1n \geq \frac{n}{k}.
\end{equation}
Now, we state the following claim.

\begin{claim} \label{claim1}
For every
$v \in V(F_i)$,
$i \geq 2$ and $j < i$,
$d_{F_j}(v) \geq 2$.
\end{claim}

To prove the claim by contradiction suppose that there exists a vertex
$v \in V(F_i)$
and a linear forest
$F_j$
with
$j < i$ such that
$d_{F_j}(v) \leq 1$.
If
$d_{F_j}(v) = 0$, then by adding 
$v$
to
$F_j$
we obtain a larger induced linear forest which is a contradiction. Now, assume that
$v$ has exactly one neighbor in
$V(F_j)$, say $u$. If $u$
is an end vertex of an induced path, then by adding $v$ to $F_j$ we obtain a larger induced linear forest, a contradiction. If $u$ is a non-leaf vertex of an induced path, then by adding $v$ to $F_j$ and removing $u$ we obtain an induced linear forest of the same order but with two less edges, a contradiction and the claim is proved.

To continue the proof, we consider two different cases depending on the parity of
$r$.

\textbf{Case 1.} $2 \nmid r$.

Consider a vertex
$v \in V(F_k)$.
By the Claim \ref{claim1}, we know that
$v$
has at least
$2(k-1)$
neighbors outside of
$F_k$.
Since
$G$
is
$r$-regular and 
$r$
is odd, we have
$2(k-1) \leq r-1$
which implies that 
$k \leq \frac{r+1}{2}$.
By (\ref{eq:12}), we have
\begin{equation} \label{eq:14}
|F_1| \geq \frac{2}{r+1}n.
\end{equation}
Hence, the assertion is proved in this case.

\textbf{Case 2.} $2 \mid r$.

Consider a vertex
$v \in V(F_k)$.
By the Claim \ref{claim1}, we know that
$v$
has at least
$2(k-1)$
neighbors outside of
$F_k$.
Since
$G$
is
$r$-regular,
we have
$2(k-1) \leq r$. 
If $2(k-1) \leq r-1$, then the proof is similar to the previous case.
Now, if
$2(k-1) = r$, then noting that every vertex
$v \in F_k$
has
$r$
neighbors outside of
$F_k$, 
$F_k$
is an independent set. For completing the proof, we need the following claim.

\begin{claim} \label{claim2}
If
$2(k-1) = r$, then
every vertex of
$F_k$
has exactly two pendant neighbors in
$V(F_{k-1})$ and $c_{k-1} \geq 2c_k$.
\end{claim}

By the Claim \ref{claim1} and the equality
$2(k-1) = r$,
we conclude that every vertex in
$F_k$
has exactly two neighbors in each
$F_i$,
$i < k$.
Also, a vertex of degree 2 in $F_{k-1}$ has at least $2(k-2) = r-2$ neighbors in $\bigcup_{i=1}^{k-2}F_i$. Along with two neighbors in $F_{k-1}$, there remains no edges between the vertex and vertices in $F_k$. Hence, each edge between $F_k$ and $F_{k-1}$ is not adjacent to a vertex of degree 2 in $F_{k-1}$.

Now, consider a vertex $v \in V(F_{k})$ and suppose that its two neighbors in $V(F_{k-1})$ are $u$ and $w$. Both $u$ and $w$ are end vertices of two paths, say $P_u$ and $P_w$, respectively. Suppose that $P_u \neq P_w$. By adding $v$ to $F_{k-1}$ we obtain a larger linear induced forest, a contradiction. Thus, we have $P_u = P_w$. Note that no other vertex in $F_k$ can be adjacent to $u$ (similarly to $w$), because the degree of $u$ exceeds $r$. Therefore, every vertex in $F_k$ is adjacent to the endpoints of a path in $F_{k-1}$ and these paths are distinct. Hence, we have $|F_{k-1}| \geq 2|F_{k}|$ and the claim is proved.

In the sequel, if $c_k \geq \frac{1}{r+1}$, then by Claim \ref{claim2} we have $c_1 \geq c_{k-1} \geq 2c_k \geq \frac{2}{r+1}$, as desired. If $c_k < \frac{1}{r+1}$, then let
$G'$
be the induced subgraph on
$V(G) \setminus V(F_k)$.
We have
$|V(G')| = n(1-c_k)$.
Note that $V(G')$
is partitioned into
$k-1$
linear forests, namely
$F_1, \ldots, F_{k-1}$. Thus, we have

$$|F_1| = c_1n \geq \frac{n(1-c_k)}{k-1}.
$$
Since 
$2(k-1) = r$,
one can see that
$$
|F_1| = c_1n \geq \frac{2}{r+1}n,
$$
and the assertion is proved. Note that for every positive integer $n \geq 2$, $\text{LIF}(K_n) = 2$.
}
\end{proof}

In the following, we present an algorithm for finding an induced linear forest in $G$ of order at least $\frac{2}{r + 1}n$. We aim to construct a sequence of subgraphs $\{G_i\}$ in which $G_i$ is a maximal induced linear forest in $G \setminus \bigcup_{j=1}^{i-1}G_j$. Algorithm \ref{alg} describes this approach.

\begin{algorithm}[H]
  \caption{Finding induced linear forest of order at least $\frac{2}{r + 1}n$
    \label{alg}}
  \begin{algorithmic}[1]
    \vspace{0.25cm}
    \Statex
      Let $G'$ be initially equal to $G$ and set $j = 0$. Additionally, let all $G_i$ be empty subgraphs.
      \While{$G'$ is not empty}
      \State
      $j = j + 1$
        \While{there exists $v \in G'$ s.t. $G_j \cup \{v\}$ is an induced linear forest}
            \State
            Remove $v$ from $G'$ and add it to $G_j$
            \While{there exists $w \in G'$ s.t. $w$ has exactly one neighbor in $G_j$ called $t$ and $d_{G_j}(t) = 2$}
                \State
                Remove $t$ from $G_j$ and add it to $G'$
                \State
                Remove $w$ from $G'$ and add it to $G_j$
        \EndWhile
        \EndWhile
        \If{there is a $k < j$ where $|V(G_k)| < |V(G_j)|$}
            \State
            $k_0 = \underset{k}{\arg\min} \{ k \, | \, |V(G_k)| < |V(G_j)|\}$
            \State
            Swap $G_{k_0}$ and $G_j$
            \State
            Add $G_{k_0 + 1},\ldots,G_j$ to $G'$ and set each of them to empty subgraph again
            \State
            $j = k_0$
        \EndIf
      \EndWhile
  \end{algorithmic}
\end{algorithm}

There are several considerations about this algorithm. First, it should be proved that this algorithm terminates. Obviously, the \textit{while} loop in Line 1 cannot be executed infinitely. Furthermore, the condition of \textit{while} in the Lines 3 and 5 cannot be true more than $n$ times. Thus, the total number of operations is finite and the algorithm terminates. Second, because of the swap operation inside the \textit{if} in Line 10, the orders of subgraphs in the final sequence after running the algorithm are non-increasing. Because of the analogy between this construction and the one in the proof, Claim \ref{claim1} and Claim \ref{claim2} holds for this sequence as well. Therefore, we can conclude that using maximal induced linear forests rather than maximum orders, we can reach an induced linear forest $G_1$ of order at least $\frac{2}{r + 1}n$. Additionally, since each \textit{while} loop would take at most $n$ iterations, the run-time of the algorithm is $O(n^3)$. Although this algorithm does not find the largest induced linear forest of $G$, it finds one with the given lower bound in polynomial time.

\section{Sharp Upper Bound for the Order of the Longest\\ Induced Path in Regular Graphs}

In this section, we present a lower bound on the order of an $r$-regular graph $G$ with $\textup{LIP}(G) = k$ in terms of $r$ and $k$.

\begin{thm} \label{MIP}
	For every $r$-regular graph $G$ of order $n$ with $\textup{LIP}(G) = k$, the following holds:
	$$
    	n \geq 2k + \lceil \frac{-2(k-1)}{r} \rceil.
	$$
	 
\end{thm}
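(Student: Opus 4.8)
The plan is to select a longest induced path and bound the edges leaving it by a double count. Write $k = \textup{LIP}(G)$ and let $P = v_1 v_2 \cdots v_k$ be an induced path realizing this value; put $S = V(G) \setminus V(P)$, so that $|S| = n - k$. Since we want a lower bound on $n$, the natural quantity to control is the number of edges joining $V(P)$ to $S$, which I would estimate from each side.

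From the path side the count is exact. The $k$ vertices of $P$ have degrees summing to $rk$ by regularity, and because $P$ is \emph{induced} its only internal edges are the $k-1$ consecutive path edges, contributing $2(k-1)$ to that sum. Hence the number of edges with exactly one endpoint on $P$ equals $rk - 2(k-1)$. From the $S$ side each of the $n-k$ vertices has degree $r$ and so sends at most $r$ edges toward $P$, giving at most $r(n-k)$ crossing edges. Comparing the two estimates yields
$$ rk - 2(k-1) \le r(n-k). $$
Rearranging gives $rn \ge 2rk - 2(k-1)$, that is $n \ge 2k - \tfrac{2(k-1)}{r}$; since $n$ and $2k$ are integers, taking the ceiling turns this into $n \ge 2k + \lceil \tfrac{-2(k-1)}{r} \rceil$, which is the claim.

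There is essentially no hard step here: the whole argument is a one-line double count, and the only points needing care are that no chords inflate the internal edge count (guaranteed since $P$ is induced) and the final integer-ceiling conversion. It is worth noting that the argument never uses maximality of $P$ beyond defining $k$ — the existence of an induced path of order $k$ already forces the bound, because $2k - \tfrac{2(k-1)}{r}$ is increasing in $k$ for $r \ge 2$. For the sharpness claim advertised in the introduction, the inequality becomes an equality exactly when $S$ is an independent set and every vertex of $S$ directs all $r$ of its edges into $V(P)$; checking $C_5$ (with $r=2$, $k=4$, $n=5$) already confirms this extremal configuration, and this structural condition is the right guide for constructing the general tight examples.
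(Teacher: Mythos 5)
Your proof is correct and follows essentially the same route as the paper: both count the edges between the longest induced path $P$ and its complement exactly as $rk - 2(k-1)$ (the paper writes this as $2(r-1) + (k-2)(r-2)$), bound them from the other side by $r$ per outside vertex, and convert to the stated bound via integrality of $n - 2k$. Your observation that maximality of $P$ is never really used, and your $C_5$ sharpness check, are sound side remarks consistent with the paper's construction of extremal examples.
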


\begin{proof}
	{
	Let $P$ be a longest induced path in $G$. Each non-leaf vertex in $P$ has exactly $r-2$ neighbors outside of the path and each of the end vertices of $P$ has $r-1$ neighbors outside of $P$. Thus, the total number of edges between $P$ and $G\setminus V(P)$ is $2(r-1) + (k-2) (r-2) = rk - 2(k-1)$. On the other hand, each vertex of $G\setminus V(P)$ is adjacent to at most $r$ vertices of $P$. Hence, there are at least ${\frac{rk - 2(k-1)}{r}} = k - {\frac{2(k-1)}{r}}$ vertices in $G\setminus V(P)$. Therefore, $G$ has order at least $2k + \lceil \frac{-2(k-1)}{r} \rceil$. Since the sum of all degrees in $G$ is even, if $r$ and $\lceil \frac{-2(k-1)}{r} \rceil$ are both odd, then the order of $G$ is at least $2k + \lceil \frac{-2(k-1)}{r} \rceil + 1$. 
	
	We show that there are some graphs which meet the upper bound. Now, for every positive integer $r$, we construct an $r$-regular graph $G$ of order $n$ with $\textup{LIP}(G) = r$, where the equality holds. Consider a path of order $r$, say $P$, and a set of vertices $S$ outside of $P$. If $r$ is even, let $S$ consist of $r-1$ vertices. Otherwise, let $S$ contain $r$ vertices. In the case of even $r$, pick a vertex in $S$, say $v$, and join each of the other $r-2$ vertices of $S$ to all vertices of $P$. Afterward, remove the edges of an arbitrary matching of size $\frac{r-2}{2}$ between the non-end vertices of $P$ and $S \setminus \{v \} $. Now, join $v$ to all vertices of the removed matching.

	
	If $r$ is odd, then pick $u$ and $w$ from $S$ and join all other $r-2$ vertices of $S$ to all vertices of $P$ to obtain $K_{r,r-2}$. now remove a matching of size $r - 2$ of this $K_{r,r-2}$. Next, join $w$ to $u$ and the $r-2$ vertices in $S$. Then, join $u$ to all non-end vertices in $P$. Finally, join $u$ to one of the end vertices of $P$ and join $w$ to the other one. One can check that these graphs are $r$-regular and have a longest induced path of order $r$. Therefore, the equality holds for this family of graphs.
}
\end{proof}

Now, we have an immediate corollary.

\begin{precor}
	In any $r$-regular graph of order $n$ with $\textup{LIP}(G) = k$, $k \leq \frac{rn - 2}{2r - 2}$.
\end{precor}

\textbf{Note on Complexity.} In \cite{np}, it is shown that determining whether the graph $G$ has an induced path of length at least $k$, with arbitrary $k$, is NP-complete. Consequently, finding the LIP of the graph is NP-complete, too. However, while this theorem holds for general graphs, finding the longest induced path, to the best of our knowledge, is neither proved to be NP-complete, nor any polynomial time algorithm has been presented for, up to now.

\section{Nordhaus-Gaddum Inequality for the Order of \\ Largest Induced Forest}

In this section, we present a Nordhaus-Gaddum type inequality for the order of maximum induced forest and present a family of examples which the equality case holds for them.

\begin{thm} \label{upperbound_sum} 
\textit{
Let 
$G$
be a graph of order $n$. Then, $a(G) + a(\overline{G}) \leq n+4$ and this bound is sharp.
}
\end{thm}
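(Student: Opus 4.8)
The plan is to choose maximum induced forests in $G$ and in $\overline{G}$ and to control how much they can overlap. Let $A \subseteq V(G)$ induce a forest in $G$ with $|A| = a(G)$, and let $B \subseteq V(G)$ induce a forest in $\overline{G}$ with $|B| = a(\overline{G})$. The entire argument hinges on bounding $|A \cap B|$: the set $S = A \cap B$ induces a forest in $G$ (being a subset of $A$) and simultaneously a forest in $\overline{G}$ (being a subset of $B$).

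First I would establish the key combinatorial fact that such an $S$ has at most four vertices. Writing $s = |S|$, the edges of $G[S]$ and of $\overline{G}[S]$ partition the $\binom{s}{2}$ pairs inside $S$, while each of these two induced subgraphs, being a forest, has at most $s-1$ edges. Hence $\binom{s}{2} \le 2(s-1)$, which forces $s \le 4$. With this in hand, the inequality follows immediately from inclusion--exclusion, since $A \cup B \subseteq V(G)$:
$$a(G) + a(\overline{G}) = |A| + |B| = |A \cup B| + |A \cap B| \le n + 4.$$

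It remains to exhibit a family attaining equality, and this is the step I expect to require the most care. Equality forces both $A \cup B = V(G)$ and $|A \cap B| = 4$, so the overlap should induce a four-vertex graph that is a forest in $G$ and in $\overline{G}$; the natural candidate is $P_4$, which is self-complementary. I would take $G = P_4 \cup \overline{K_m}$, the disjoint union of a path on four vertices with $m$ isolated vertices. Then $G$ is itself a forest, so $a(G) = n = m+4$, and $\overline{G}$ is the join of $\overline{P_4} = P_4$ with $K_m$.

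The obstacle is to verify that $a(\overline{G}) = 4$. Here one argues by a short case analysis on how many vertices an induced forest of $\overline{G}$ can take from the clique part $K_m$ and from the path part: taking two vertices of $K_m$ blocks every vertex of the path part (each would complete a triangle), taking one vertex of $K_m$ allows only an independent set of size at most two from the path part, and taking none leaves just the $P_4$. In every case the forest has at most four vertices, so $a(\overline{G}) = 4$. Granting this, $a(G) + a(\overline{G}) = (m+4) + 4 = n + 4$, and the bound is sharp for every $n \ge 4$.
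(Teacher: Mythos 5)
Your proof is correct. For the inequality itself your argument is essentially the paper's: the paper bounds $|V(F_1)\cap V(F_2)|$ by noting (via pigeonhole) that one of the two induced subgraphs on the intersection $X$, $|X|=t$, carries at least $\binom{t}{2}/2 = t(t-1)/4$ edges, which exceeds $t-1$ once $t\geq 5$ and hence forces a cycle; your direct double count $\binom{s}{2} = e(G[S]) + e(\overline{G}[S]) \leq 2(s-1)$ is the same idea phrased without contradiction, and arguably cleaner. Where you genuinely diverge is the sharpness construction: the paper takes $G = P_n$ ($n \geq 4$), observes $a(G) = n$, and notes that $\overline{P_n}$ contains an induced $P_4$ (e.g., the complement of four consecutive path vertices), so $a(\overline{G}) \geq 4$; combined with the upper bound this already forces equality. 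You instead take $G = P_4 \cup \overline{K_m}$ and compute $a(\overline{G}) = 4$ exactly by case analysis on how many clique vertices the forest uses. Your case analysis is correct, but note it is more work than needed: since $G$ is a forest you have $a(G) = n$, and the join structure immediately gives an induced $P_4$ in $\overline{G}$ (namely $\overline{P_4} \cong P_4$ on the path part), so $a(\overline{G}) \geq 4$ plus your own upper bound $a(G) + a(\overline{G}) \leq n+4$ pins down $a(\overline{G}) = 4$ with no further argument --- exactly the shortcut the paper exploits. Both constructions are valid for all $n \geq 4$; the paper's is connected, yours is not, which is a matter of taste rather than substance.
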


\begin{proof}
{
	Let $F_1$ and $F_2$ be the largest induced forests of $G$, and $\overline{G}$, respectively. By contradiction, assume that $a(G) + a(\overline{G}) \geq n+5$. Since $|V(F_1)\cup V(F_2)| \leq n$, we have $|V(F_1)\cap V(F_2)| \geq 5$. Let $X = V(F_1)\cap V(F_2)$ and $|X| = t$. With no loss of generality and using pigeonhole principle, the induced subgraph on $X$ in $F_1$ has at least $\frac{\binom{t}{2}}{2} = \frac{t(t-1)}{4}$ edges. Since $t \geq 5$, $\frac{t(t-1)}{4} \geq t$ and the induced subgraph on $X$ in $F_1$ contains a cycle we obtain a contradiction. Hence, $a(G) + a(\overline{G}) \leq n+4$, for any graph of order $n$. To prove the sharpness of the bound, we present a family of graphs for which the equality holds. Consider $P_n$ for $n \geq 4$. The path $P_n$ is an induced forest and therefore, $a(G) = n$. Clearly, $\overline{P_n}$ has an induced subgraph $P_4$. Thus, we have $a(\overline{G}) \geq 4$ and the proof is complete.
}
\end{proof}

Theorem \ref{upperbound_sum} has an immediate corollary.

\begin{precor}
Let $G$ be a graph of order $n$. Then, $\textup{LIF}(G) + \textup{LIF}(\overline{G}) \leq n+4$. Moreover, this bound is sharp.
\end{precor}

\section{Computational Results and One Conjecture}

In this section, we present some computational results and propose a conjecture. We find the maximum and minimum order of largest induced forest in addition to largest and smallest LIF among all cubic graphs with fixed orders, up to 18 vertices. Graphs data were produced using GENREG \cite{GENREG} and brute-force search is carried out to find the statistics. Summary of our results can be found in Table \ref{tab:comp_results}.

\begin{table}[h]%
\caption{Computational Results for Cubic Graphs \label{tab:comp_results}}

\begin{tabularx}{\textwidth}{|c *{8}{Y}|}
\hline
\hspace*{1cm}
  $|V(G)|$  & 4    & 6      & 8 & 10 & 12 & 14 & 16 & 18 \\ 
  \hline
Maximum order of largest induced forest          & 2    & 4      & 5 & 7  & 8 & 10 & 11 & 13 \\
Minimum order of largest induced forest          & 2    & 4      & 5 & 6  & 8 & 9  & 10 & 11 \\
Maximum order of LIF   & 2    & 4      & 5 & 7  & 8 & 10 & 11 & 13 \\
Minimum order of LIF   & 2    & 3      & 5 & 6  & 7 & 8  & 10 & 11 \\
\hline
\end{tabularx}
\end{table}%

Furthermore, Punnim showed that $a(G) \geq 2t(G)$ \cite{punnim}. Let $f(G) = \frac{\text{LIF}(G)}{t(G)}$. By Caro-Wei Theorem \cite{MURPHY}, we have $f(G) \geq 1$. Let $g(n) = \min\{f(G) \, | \, |V(G)| = n\}$, where $G$ is connected. We have calculated $g(n)$ for $n$ up to 10. Table \ref{tab:carowei} shows our computational results.

\begin{table}[h]%
\caption{Computational Results for $g(n)$ \label{tab:carowei}}

\begin{tabularx}{\textwidth}{|c *{8}{Y}|}

\hline
\hspace*{1cm}
  $|V(G)|$  & 3 & 4 & 5 & 6 & 7 & 8 & 9 & 10 \\ 
\hline
\hspace{1cm}$g(n)$  & 2 & $\frac{12}{7}$ & $\frac{20}{11}$ & $\frac{15}{8}$ & $\frac{75}{41}$ & $\frac{30}{17}$ & $\frac{105}{58}$ & $\frac{35}{19}$ \\
\hline
\end{tabularx}

\end{table}

These computations can help us to find bounds for these measures and attack them theoretically. Moreover, by performing an exhaustive search on all graphs of order at most 10 and minimum degree at least 2, we propose the following conjecture.

\begin{conj}
If $G$ is a graph with $\delta(G) \geq 2$, then $f(G) \geq 2$.
\end{conj}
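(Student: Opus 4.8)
The plan is to prove the equivalent statement $\textup{LIF}(G) \ge 2t(G) = \sum_{i=1}^n \frac{2}{d_i+1}$ for every graph with $\delta(G)\ge 2$. This would simultaneously generalize Theorem \ref{lower_LIF} (for an $r$-regular graph the right-hand side is exactly $\frac{2n}{r+1}$) and serve as the linear-forest analogue of Punnim's inequality $a(G)\ge 2t(G)$ \cite{punnim}. It is worth recording first why the minimum-degree hypothesis cannot be dropped: for the star $K_{1,3}$ one has $\textup{LIF}(K_{1,3})=3$ while $2t(K_{1,3})=2\bigl(\tfrac14+3\cdot\tfrac12\bigr)=\tfrac72$, so a single vertex of degree $1$ already breaks the bound. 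Hence any argument must genuinely exploit $\delta(G)\ge 2$.

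First I would revisit the cleanest proof of the forest bound and try to upgrade it. Draw a uniformly random permutation $\pi$ of $V(G)$ and let $S$ be the set of vertices having at most one neighbor earlier than themselves in $\pi$. If $G[S]$ contained a cycle, its $\pi$-last vertex would have two earlier neighbors, a contradiction; thus $G[S]$ is a forest, and since $\Pr[v\in S]=\frac{2}{d(v)+1}$ whenever $d(v)\ge 1$ we get $\mathbb{E}|S| = 2t(G)$, reproving $a(G)\ge 2t(G)$. The difficulty is that $G[S]$ need not be \emph{linear}: a vertex placed early can accumulate arbitrarily many later neighbors (the star phenomenon), producing vertices of degree $3$. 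The goal of Plan A is to design a permutation-based inclusion rule that forces maximum degree at most $2$ while keeping $\Pr[v\in S]\ge \frac{2}{d(v)+1}$ for every vertex of degree at least $2$. A natural attempt is to additionally forbid a vertex from being adopted by an earlier neighbor that has already collected two forest-neighbors; the analytic challenge is that this condition is adaptive and its marginal probabilities are awkward to evaluate, which is precisely where the slack afforded by $\delta(G)\ge 2$ should be spent.

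As a second route, I would try to make the partition argument of Theorem \ref{lower_LIF} quantitative for irregular graphs. Partition $V(G)$ greedily into induced linear forests $F_1,\dots,F_k$ with $|F_1|\ge\cdots\ge|F_k|$ exactly as in the proof of Theorem \ref{lower_LIF}, so that Claim \ref{claim1} and an irregular analogue of Claim \ref{claim2} remain available. Claim \ref{claim1} gives $d(v)\ge 2(i-1)$, and hence $\frac{2}{d(v)+1}\le \frac{2}{2i-1}$, for every $v\in F_i$. The aim would then be a discharging scheme that assigns each vertex the charge $\frac{2}{d(v)+1}$ and redistributes it so that the total charge reaching $F_1$ is at most $|F_1|$, using that the forests are nested in size and that, in the reasoning behind Claim \ref{claim2}, the boundary case forces $|F_{k-1}|\ge 2|F_k|$.

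The main obstacle is common to both routes and is where I expect the real work to lie. The crude per-vertex estimate $\frac{2}{d(v)+1}\le\frac{2}{2i-1}$ is hopeless for the low-index forests — for $F_1$ it only yields $\frac{2}{d(v)+1}\le 2$, overcounting by a factor of two — so the vertices of $F_1$ must be charged by their \emph{true} degrees, and the heart of the problem is transferring charge from the deep forests (where the degree bound is tight) back to $F_1$ without losing the constant $2$. Equivalently, in Plan A one must capture the degree-at-most-$2$ constraint exactly, not up to a multiplicative loss, since the forest rule already attains $2t(G)$ with no room to spare. Any correct argument must also be tight on complete graphs, where $\textup{LIF}(K_n)=2=2t(K_n)$, so the extremal analysis has to pin down these equality cases; and a naive induction that simply deletes $F_k$ fails precisely because it discards the contribution $\sum_{v\in F_k}\frac{2}{d(v)+1}$ that the bound cannot afford to lose.
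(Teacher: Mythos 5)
First, a point of calibration: the statement you were asked to prove is stated in the paper as a \emph{conjecture}, not a theorem. The paper offers no proof at all --- only an exhaustive search over all graphs of order at most $10$ with $\delta(G)\geq 2$, together with the observation that $K_{2,3}$, $K_{3,3}$, and the complete graphs are the equality cases in that range. So there is no ``paper proof'' to match your attempt against, and it is no criticism of you that your attempt is incomplete; but incomplete it is, and you say so yourself. What you actually establish is only the known bound $a(G)\geq 2t(G)$ of Punnim \cite{punnim}, via the standard random-permutation argument (take $S$ to be the vertices with at most one earlier neighbor; $G[S]$ is a forest and $\Pr[v\in S]=\frac{2}{d(v)+1}$ when $d(v)\geq 1$). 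That argument is correct, as is your $K_{1,3}$ example showing $\delta(G)\geq 2$ cannot be dropped ($\textup{LIF}(K_{1,3})=3<\frac{7}{2}=2t(K_{1,3})$), and your identification of where any proof must be tight. But these are preliminaries, not a proof.

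The genuine gap is that neither of your two routes is carried past its announced obstacle, and in both cases the obstacle is exactly the theorem. In Plan A, the ``adaptive inclusion rule'' that would force maximum degree $2$ in $G[S]$ is never defined, and no lower bound on its marginals is proved; since the unmodified rule already achieves only $\mathbb{E}|S|=2t(G)$ with zero multiplicative slack (tight on $K_n$, where $\textup{LIF}=2t=2$), any exclusion to repair a degree-$3$ vertex must be compensated exactly, and you give no mechanism --- in particular, the hypothesis $\delta(G)\geq 2$ never actually enters your analysis anywhere. In Plan B, the needed ``irregular analogue of Claim \ref{claim2}'' does not exist as stated: the paper's proof of Claim \ref{claim2} leans on regularity through the equality $2(k-1)=r$, which pins down the boundary structure between $F_k$ and $F_{k-1}$; without regularity, vertices of different degrees in $F_k$ satisfy different constraints and the pairing argument collapses. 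Your per-vertex bound $\frac{2}{d(v)+1}\leq\frac{2}{2i-1}$ from Claim \ref{claim1} is correct but, as you note, lossy by a factor of $2$ on $F_1$, and the discharging scheme meant to recover this loss is never constructed. In short: the proposal is a reasonable research program for what remains an open problem, with sound supporting computations, but it proves nothing beyond $a(G)\geq 2t(G)$, which is strictly weaker than the conjectured $\textup{LIF}(G)\geq 2t(G)$.
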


$K_{2,3}$, $K_{3,3}$, and the complete graphs are the only graphs of order at most 10 and minimum degree at least 2 that attain the equality.
\\

\end{document}